\newtheorem{theorem}{Theorem}[section]
\newtheorem{proposition}[theorem]{Proposition}
\newtheorem{lemma}[theorem]{Lemma}
\numberwithin{equation}{section}
\begin{document}
\baselineskip=15.5pt

\title[Poincar\'e sheaf over irreducible curves]{Poincar\'e sheaves
on the moduli spaces of 
torsionfree sheaves over an irreducible curve}

\author[U. N. Bhosle]{Usha N. Bhosle}

\address{School of Mathematics, Tata Institute of Fundamental
Research, Homi Bhabha Road, Mumbai 400005, India}

\email{usha@math.tifr.res.in}

\author[I. Biswas]{Indranil Biswas}

\address{School of Mathematics, Tata Institute of Fundamental
Research, Homi Bhabha Road, Mumbai 400005, India}

\email{indranil@math.tifr.res.in}

\subjclass[2000]{14F05, 14D20, 14P99}

\keywords{Poincar\'e sheaf, descent condition, stable sheaf,
real curve}

\date{}

\begin{abstract} 
Let $Y$ be a geometrically irreducible reduced projective curve
defined over $\mathbb R$. Let $U_Y(n,d)$ (respectively, $U'_Y(n,d)$)
be the moduli space of geometrically stable
torsionfree sheaves (respectively, locally free sheaves) on $Y$
of rank $n$ and degree $d$. Define $\chi\, =\, d+n(1-\text{genus}(Y))$,
where $\text{genus}(Y)$ is the arithmetic genus.
If $2n$ is coprime to $\chi$, then there
is a Poincar\'e sheaf over $U_Y(n,d)\times Y$.
If $2n$ is not coprime to $\chi$, then there
is no Poincar\'e sheaf over any nonempty open subset of $U'_Y(n,d)$.
\end{abstract}

\maketitle

\section{Introduction}

Let $C$ be a smooth complex projective curve.
Let $U_C(n,d)$ be the moduli space of stable vector bundles over $C$
of rank $n$ and degree $d$. Assume that $U_C(n,d)$ is nonempty;
this is ensured if $\text{genus}(C)\, \geq\, 2$. A vector bundle
${\mathcal E}\,\longrightarrow\, U_C(n,d)\times C$ is called
a \textit{Poincar\'e vector bundle} if for each point $z\,\in\,
U_C(n,d)$, the vector bundle ${\mathcal E}\vert_{\{z\}\times C}$
over $C$ is in the isomorphism class defined by $z$. By a
\textit{Poincar\'e vector bundle} over an open subset
${\mathcal U}\, \subset\, U_C(n,d)$ we will mean a vector
bundle ${\mathcal E}\,\longrightarrow\, {\mathcal U}\times C$
such that for each point $z\,\in\, {\mathcal U}$, the vector
bundle ${\mathcal E}\vert_{\{z\}\times C}$
is in the isomorphism class defined by $z$.

If $n$ is coprime to $d$, then there is a Poincar\'e
vector bundle over $U_C(n,d)\times C$. If $n$ and $d$ are not coprime,
then there is no Poincar\'e
vector bundle over any nonempty open subset of $U_C(n,d)$
\cite[Theorem 2]{Ra}. We note that $n$ and $d$
are coprime if and only if $n$ and $\chi\, :=\, d- n(\text{genus}(C)
-1)$ are coprime; the Euler characteristic of any vector bundle of rank 
$n$ and degree $d$ over $C$ is $\chi$.

Let $D$ be a geometrically irreducible smooth projective curve
defined over $\mathbb R$. A vector bundle
$E$ over $D$ is called geometrically stable if the vector bundle
$E\bigotimes_{\mathbb R}{\mathbb C}$ over $D\times_{\mathbb R}
\mathbb C$ is stable.
Let $U_D(n,d)$ be the moduli space of geometrically stable vector 
bundles over $C$ of rank $n$ and degree $d$.
Assume that $U_D(n,d)$ is nonempty;
this is ensured if $\text{genus}(D)\, \geq\, 2$.
If $D$ has a real point,
then there is a Poincar\'e
vector bundle over $U_D(n,d)\times D$ if and only if $n$ and $\chi$
are coprime, where $\chi$ is defined as above. If $D$ does not have
any  real point, then there is a Poincar\'e
vector bundle over $U_D(n,d)\times D$ if and only if $2n$ and $\chi$
are coprime. (See \cite{BHu}.)

Our aim here is to address this question for curves not
necessarily smooth.

Let $Y$ be a geometrically irreducible reduced projective curve
defined over
the real numbers. A torsionfree sheaf $V$ on $Y$ is called geometrically
stable if the coherent sheaf on $Y\times_{\mathbb R}\mathbb C$ defined
by $V$ is stable. Let $U_Y(n,d)$
(respectively, $U'_Y(n,d)$) be the moduli space of geometrically stable
torsionfree sheaves (respectively, locally free sheaves) on
$Y$ of rank $n$ and degree $d$. We assume that the moduli
space $U_Y(n,d)$ has points defined over $\mathbb C$ (the set
of points of $U_Y(n,d)$ defined over $\mathbb R$ is allowed to
be empty); this is ensured if the arithmetic genus of $Y$
is at least two.

Define $\chi\, :=\, d- n(\text{genus}(Y)-1)$, which is the Euler
characteristic of any sheaf on $Y$ lying in $U_Y(n,d)$.

Assume that $Y$ does not have any point defined over $\mathbb R$..
We prove the following (see Theorem \ref{thm2}):

\medskip
\textit{If $2n$ is coprime to $\chi$, then there
is a Poincar\'e sheaf over $U_Y(n,d)\times Y$.}

\textit{If $2n$ is not coprime to $\chi$, then there is no
Poincar\'e sheaf over any nonempty open subset of $U'_Y(n,d)$.}
\medskip

The proof of Theorem \ref{thm2} given here is different from the
one in \cite{BHu} where it is proved under the assumption that
$Y$ is smooth. The proof in \cite{BHu} uses the smoothness
assumption in an essential way.

\section{Poincar\'e sheaf}\label{sec1}

Let $X$ be an irreducible reduced projective curve defined
over an algebraically closed field $k$ of characteristic zero. 
Let $U(n,d)$ be the moduli space of torsionfree
stable sheaves of rank $n$ and degree $d$ on $X$; it is 
a GIT quotient of a Quot scheme $Q$ of coherent quotient sheaves of 
${\mathcal O}_X^N$ 
by $\text{PGL}(N,k)$ \cite{Ne}, \cite{S}. We assume that
$U(n,d)$ has points defined over $\mathbb C$; as mentioned before,
the set of points of $U_Y(n,d)$ defined over $\mathbb R$ may be
be empty.

Let $Q'$ denote the set of points of $Q$ corresponding to stable 
quotient sheaves. Over $Q'\times X$, there is a universal sheaf 
$${\mathcal E} \,\longrightarrow\, Q'\times X \, .$$ 
Let ${\mathcal E}_q\,:=\, {\mathcal E}\vert_{q\times X}$. 

The points of $Q'\, \subset\, Q$ are identified with the properly
stable points for the action of $\text{PGL}(N,k)$ on $Q$.
The isotropy subgroup $G_q$ at $q$ is isomorphic to 
$\text{Aut}({\mathcal E}_q)$. 
Since ${\mathcal E}_q$ is stable, we have $G_q \,=\, k^*$; 
any $c\,\in\, k^*$ acts on ${\mathcal E}$ by multiplication with the 
scalar $c$. By a result of Nevins \cite[p. 2482, Theorem 1.2]{N}, the
sheaf ${\mathcal E}$ descends to $U(n,d)\times X$ 
if and only if for every $(q,x)\in Q'\times X$, the ${\mathcal 
O}_{Q'\times X, (q,x)}$--modules 
$$
{\mathcal E}\otimes ({\mathcal O}_{Q'\times X}/m_{q,x}) ~\,~ \,
\text{~and~}~\,~\,
Tor^{{\mathcal O}_{Q'\times X}}_1({\mathcal E}, {\mathcal O}_{Q'\times 
X}/ m_{q,x})
$$ 
are trivial representations of $G_q$ (the action of $G_q$ on $X$ is 
taken to be the trivial one); here $m_{q,x}$ denotes
the ideal sheaf of the point $(q,x) \,\in\, Q'\times X$.

The proof of Proposition \ref{p1} is straightforward.

\begin{proposition}\label{p1}
Any $c \in k^*$ acts on the modules
$${\mathcal E}\otimes ({\mathcal O}_{Q'\times X/m_{q,x}})~\,~ \,
\text{~and~}~\,~ \,
Tor^{{\mathcal O}_{Q'\times X}}_1({\mathcal E}, {\mathcal O}_{Q'\times
X}/m_{q,x})$$
by multiplication by the scalar $c$.
\end{proposition}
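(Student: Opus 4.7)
The plan is to exploit the two structural facts implicit in the setup: the group $G_q=k^*$ acts trivially on $Q'\times X$ (since it is the scalar subgroup $k^*\subset\text{GL}(N,k)$ sitting over the identity of $\text{PGL}(N,k)$, the group which actually acts on $Q$), while on the universal sheaf $\mathcal{E}$ it acts by scalar multiplication inherited from the standard action on $\mathcal{O}_{Q'\times X}^N$. Consequently the $G_q$-action on the ring $\mathcal{O}_{Q'\times X}$, on the ideal $m_{q,x}$, and on the quotient $\mathcal{O}_{Q'\times X}/m_{q,x}$ is all trivial.

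For the first module the conclusion is then immediate: the diagonal $G_q$-action on $\mathcal{E}\otimes_{\mathcal{O}_{Q'\times X}}(\mathcal{O}_{Q'\times X}/m_{q,x})$ is the composition of multiplication by $c$ on the first factor with the trivial action on the second, hence is multiplication by $c$.

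For the $Tor_1$ term I would invoke the bifunctoriality of $Tor$. Because $G_q$ acts trivially on the sheaf of rings, the scalar map $c\cdot\text{id}_\mathcal{E}$ is a genuine $\mathcal{O}_{Q'\times X}$-linear endomorphism of $\mathcal{E}$, and the functor $Tor_1^{\mathcal{O}_{Q'\times X}}(-,\mathcal{O}_{Q'\times X}/m_{q,x})$ is $k$-linear (since $k$ lies in the center of the structure sheaf), so it sends $c\cdot\text{id}_\mathcal{E}$ to $c\cdot\text{id}_{Tor_1}$. Equivalently, one can pick a free resolution $F_\bullet\to\mathcal{O}_{Q'\times X}/m_{q,x}$; since $G_q$ acts trivially on both the ring and on the module being resolved, each $F_i$ may be given the trivial $G_q$-action with equivariant differentials, so that $G_q$ acts on $\mathcal{E}\otimes F_i$ by $c\otimes 1=c$ term by term, which descends to scalar multiplication by $c$ on $H_1=Tor_1$. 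No essential obstacle is anticipated; the proposition is really the bookkeeping observation that any construction built from ``$\mathcal{E}$ of weight one, everything else of weight zero'' inherits weight one.
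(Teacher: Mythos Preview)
Your argument is correct and matches the paper's treatment: the paper simply declares the proposition ``straightforward'' and gives no further details, so your weight-bookkeeping explanation (trivial $G_q$-action on the ring and residue field, weight-one action on $\mathcal{E}$, functoriality of $Tor$) is exactly the kind of routine unpacking the authors have in mind.
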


\begin{proposition}\label{thm1}
\begin{enumerate}
	\item There exists a Poincar\'e sheaf on 
$U(n,d)\times X$ if $n$ and $d$ are coprime.
	\item There is no Poincar\'e sheaf on any open subset of 
$U'(n,d)$ if $n$ and $d$ are not coprime.
\end{enumerate}
\end{proposition}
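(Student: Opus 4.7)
The strategy is to apply the descent criterion of Nevins recalled just before the statement, combined with Proposition~\ref{p1}. That proposition tells me that $c\in G_q\,=\,k^*$ acts on both the fiber ${\mathcal E}\otimes({\mathcal O}_{Q'\times X}/m_{q,x})$ and the relevant Tor module by multiplication by $c$, i.e.\ with weight $+1$. In order to descend a twist of the form ${\mathcal E}\otimes p_{Q'}^*L$ via Nevins, I therefore need a line bundle $L$ on $Q'$ whose $G_q$-action on each fiber has weight $-1$, and I would look for such line bundles among the Knudsen--Mumford determinants of cohomology $\det Rp_{Q'*}({\mathcal E}\otimes p_X^*M)$ for coherent sheaves $M$ on $X$. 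Since $G_q$ acts on each $H^i({\mathcal E}_q\otimes M)$ by the scalar $c$, such a determinant carries $G_q$-weight $\chi({\mathcal E}_q\otimes M)$ at the point $q$.

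For part~(1), I would fix a smooth point $x\in X$ and take $M_1\,=\,{\mathcal O}_X$ and $M_2\,=\,{\mathcal O}_X(x)$. Writing $\chi\,=\,d-n(g-1)$, the corresponding weights work out to $\chi$ and $\chi+n$, so integer combinations of these two line bundles realize exactly the subgroup $\gcd(\chi,n)\mathbb Z\,=\,\gcd(d,n)\mathbb Z$ of $\mathbb Z$. When $(n,d)\,=\,1$ I can solve $a\chi+b(\chi+n)\,=\,-1$ in integers and set $L\,:=\,(\det Rp_{Q'*}{\mathcal E})^{\otimes a}\otimes(\det Rp_{Q'*}({\mathcal E}\otimes p_X^*{\mathcal O}_X(x)))^{\otimes b}$. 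This $L$ has $G_q$-weight $-1$ at every $q$, so the $G_q$-action on ${\mathcal E}\otimes p_{Q'}^*L$ becomes trivial on each fiber and Tor module, and Nevins' theorem yields a Poincar\'e sheaf on $U(n,d)\times X$.

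For part~(2), I would argue by contradiction: suppose a Poincar\'e sheaf ${\mathcal F}$ exists on $V\times X$ for some nonempty open $V\subset U'(n,d)$, and set $V'\,:=\,\pi^{-1}(V)\subset Q'$, where $\pi\colon Q'\to U(n,d)$ is the quotient map. Both ${\mathcal E}|_{V'\times X}$ and $\pi^*{\mathcal F}$ are families of stable locally free sheaves parametrised by $V'$ that are fiberwise isomorphic, so, $\mathrm{Hom}$ between two isomorphic stable sheaves being one-dimensional, cohomology and base change turn $p_{V'*}\mathcal{H}om(\pi^*{\mathcal F},{\mathcal E}|_{V'\times X})$ into a line bundle $M$ on $V'$, and evaluation gives ${\mathcal E}|_{V'\times X}\,\cong\,\pi^*{\mathcal F}\otimes p_{V'}^*M$. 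Comparing $G_q$-weights ($+1$ on the left by Proposition~\ref{p1}, $0$ on $\pi^*{\mathcal F}$) forces $M$ to have $G_q$-weight $+1$ at every $q\in V'$. The main obstacle is the last step: I need that every line bundle on any nonempty open subset of $Q'$ has $G_q$-weight divisible by $\gcd(n,d)$. This reduces to showing that $\mathrm{Pic}$ of the relevant open, modulo pullbacks from $U(n,d)$ (which automatically carry weight $0$), is generated by determinants of cohomology of the universal family, whose weights computed in the first paragraph lie in $n\mathbb Z+\chi\mathbb Z\,=\,\gcd(n,d)\mathbb Z$. Once this is in hand, $\gcd(n,d)\,>\,1$ rules out weight $+1$ and gives the contradiction.
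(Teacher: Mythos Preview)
Your argument for part~(1) is essentially the paper's own: both apply Nevins' descent criterion via Proposition~\ref{p1} and manufacture a line bundle on $Q'$ of $G_q$-weight $-1$ as an integer combination of determinant-type bundles. The only cosmetic difference is the choice of generators: the paper uses ${\rm Det}({\mathcal E})$ (weight $\chi$) together with $\Lambda^n{\mathcal E}|_{Q'\times x_0}$ (weight $n$) and solves $na+\chi b=-1$, whereas you use $\det Rp_{Q'*}{\mathcal E}$ and $\det Rp_{Q'*}({\mathcal E}(x))$ (weights $\chi$ and $\chi+n$). These span the same lattice $\gcd(n,\chi)\mathbb Z$, and indeed the ratio of your two bundles is exactly the paper's second bundle.

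For part~(2) the paper gives no argument of its own; it simply invokes \cite[Corollary~2.3]{BH}. Your sketch is the standard line of reasoning behind that citation: compare $\pi^*{\mathcal F}$ with ${\mathcal E}$ on the inverse image in $Q'$ to produce a linearised line bundle of $G_q$-weight $+1$, then obtain a contradiction from the fact that every linearised line bundle on the relevant open piece of $Q'$ has weight lying in $\gcd(n,\chi)\mathbb Z$. You correctly flag the last step as the crux; it is not automatic for an arbitrary open $V'\subset Q'$ and is precisely what the cited reference supplies (ultimately via Dr\'ezet--Narasimhan type control of the equivariant Picard group). So your outline is sound but, as you yourself note, not self-contained at that point---which is exactly the level of detail the paper offers.
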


\begin{proof}
The first part is proved in \cite[Ch. 5, \S~7, Theorem 5.12$'$]{Ne}.
We include a proof which will be referred in Section \ref{sec3}.

Let $\chi:= \chi(E_q)$, where $q\in Q'$. Since g.c.d. $(n,d)=1$, 
there 
exist integers $a$ and $b$ such that $n a + \chi b =-1$. 
Let ${\rm Det}({\mathcal E})$ be the determinant line bundle
on $Q'$ associated to the family 
${\mathcal E} \,\longrightarrow\, Q'\times X$. We recall that
$$
{\rm Det}({\mathcal E})\, =\, (\det R^0f_*{\mathcal E})\otimes
(\det R^1f_*{\mathcal E})^*\, ,
$$
where $f\, :\, Q'\times X\,\longrightarrow\, Q'$ is the projection.
Fix a smooth point $x_0\in X$. Define the line bundle on $Q'$ 
\begin{equation}\label{cL}
{\mathcal L}:= ({\rm Det}({\mathcal E}))^b \otimes 
(({\Lambda}^n {\mathcal E}\mid_{Q'\times x_0})^{\otimes a})\, .
\end{equation}
Any $c\,\in\, k^*$ acts on ${\mathcal L}$ as multiplication by $c^{-1}$. 
Define
$$
{\mathcal E}':= {\mathcal E}\otimes p_{Q'}^*{\mathcal L} \, .
$$
Then, by Proposition \ref{p1}, the group $k^*$ acts trivially on 
$$
{\mathcal E}'\otimes ({\mathcal O}_{Q'\times X}/m_{q,x}) ~\,~\,
\text{~and~}~\,~\, 
Tor^{{\mathcal O}_{Q'\times X}}_1({\mathcal E}', {\mathcal O}_{Q'\times 
X}/m_{q,x})\, .
$$
By a result of Nevins \cite[Theorem 1.2]{N}, the sheaf ${\mathcal E}'$ 
descends to $U(n,d)\times X$ giving the required Poincar\'e sheaf.

Th second part follows exactly as in \cite[Corollary 2.3]{BH}.
\end{proof}

\section{Curves defined over real numbers}\label{sec3}
 
Let $Y$ be a geometrically irreducible reduced curve defined over 
$\mathbb R$.
Let $U_Y(n,d)$ be the moduli space of geometrically stable torsionfree 
sheaves on $Y$ of rank $n$ and degree $d$. We assume that $U_Y(n,d)$
is nonempty. Let
$$
U'_Y(n,d)\, \subset\, U_Y(n,d)
$$
be the Zariski open subscheme parametrizing the locally free
stable sheaves.

\begin{lemma}\label{lem1}
Assume that $Y$ has a smooth real point. Then there is a
Poincar\'e sheaf on $U_Y(n,d)\times Y$ if $d$ is coprime
to $n$. If $d$ is not coprime to $n$, then
there is no Poincar\'e sheaf on any nonempty Zariski
open subset of $U'_Y(n,d)$.
\end{lemma}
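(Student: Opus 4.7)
My plan is to mimic the proof of Proposition~\ref{thm1} over $\mathbb{R}$, using the smooth real point to ensure every piece of the construction is defined over $\mathbb{R}$, and to use base change to $\mathbb{C}$ to handle the non-existence direction.

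For the existence part (when $\gcd(n,d)=1$), I would observe that the Quot scheme $Q$, the universal quotient sheaf $\mathcal{E}$ on $Q\times Y$, and the determinant line bundle $\mathrm{Det}(\mathcal{E})$ on $Q$ are all canonically defined over $\mathbb{R}$. The one piece of data in the construction of $\mathcal{L}$ in (\ref{cL}) that requires a choice is the smooth point $x_0\in Y$; because $Y$ has a smooth real point by hypothesis, I would take $x_0\in Y(\mathbb{R})$, so that $\Lambda^n\mathcal{E}|_{Q\times\{x_0\}}$ is a line bundle on $Q$ defined over $\mathbb{R}$. Choosing $a,b\in\mathbb{Z}$ with $na+\chi b=-1$, the resulting $\mathcal{L}$ and the twisted universal sheaf $\mathcal{E}'=\mathcal{E}\otimes p_{Q'}^*\mathcal{L}$ are then defined over $\mathbb{R}$.

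Next I would invoke Proposition~\ref{thm1}(1) after base change to $\mathbb{C}$, which produces a descent of $\mathcal{E}'_\mathbb{C}$ to a Poincar\'e sheaf on $U_Y(n,d)_\mathbb{C}\times Y_\mathbb{C}$. Since every ingredient in this construction (the sheaf $\mathcal{E}'$, the $\mathrm{PGL}(N)$-action on $Q$, and the GIT morphism $Q'\to U_Y(n,d)$) is defined over $\mathbb{R}$, the resulting descent datum is automatically Galois-invariant, so faithfully flat descent along $\mathrm{Spec}\,\mathbb{C}\to\mathrm{Spec}\,\mathbb{R}$ yields a Poincar\'e sheaf on $U_Y(n,d)\times Y$.

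For the non-existence direction (when $\gcd(n,d)>1$), I would argue by contradiction: a Poincar\'e sheaf on $\mathcal{U}\times Y$ for some nonempty Zariski open $\mathcal{U}\subset U'_Y(n,d)$ would base change to a Poincar\'e sheaf on $\mathcal{U}_\mathbb{C}\times Y_\mathbb{C}$, where $\mathcal{U}_\mathbb{C}$ is a nonempty open subscheme of $U'_{Y_\mathbb{C}}(n,d)$, contradicting the second part of Proposition~\ref{thm1}. The genuinely delicate step in the whole argument, and the one that pins down the use of the hypothesis, is the definition of $\mathcal{L}$ over $\mathbb{R}$: without a real choice of $x_0$, the line bundle $\Lambda^n\mathcal{E}|_{Q\times\{x_0\}}$ lives only on $Q_\mathbb{C}$ and does not carry an $\mathbb{R}$-form, so the twist needed for Nevins' criterion cannot be realized over $\mathbb{R}$. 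Everything else is either a direct application of Proposition~\ref{thm1} over $\mathbb{C}$ or formal Galois descent.
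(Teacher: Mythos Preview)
Your proposal is correct and follows essentially the same approach as the paper: for existence you take $x_0$ to be a smooth real point so that the construction in Proposition~\ref{thm1}(1) is defined over $\mathbb{R}$, and for non-existence you base change a hypothetical Poincar\'e sheaf to $\mathbb{C}$ and invoke Proposition~\ref{thm1}(2). The paper's proof is more terse---it simply asserts that with $x_0$ real the Poincar\'e sheaf of Proposition~\ref{thm1} is defined over $\mathbb{R}$---but your explicit mention of Galois descent just spells out what the paper leaves implicit.
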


\begin{proof}
Let $X\,:=\, Y\times_{\mathbb R}\mathbb C$ be the complex curve
obtained by base change to $\mathbb C$. We note that the base
change $U_Y(n,d)_{\mathbb C}\,=\, U_Y(n,d)\times_{\mathbb R}\mathbb C$
is the moduli space $U_X(n,d)$ of stable torsionfree
sheaves on $X$ of rank $n$ and degree $d$. Similarly,
$U'_Y(n,d)_{\mathbb C}\,=\, U'_Y(n,d)\times_{\mathbb R}\mathbb C$
is the moduli space $U'_X(n,d)$ of stable vector bundles
on $X$ of rank $n$ and degree $d$.

If ${\mathcal E}\, \longrightarrow\, {\mathcal U}\times Y$ is a
Poincar\'e sheaf, where ${\mathcal U}\, \subset\, U'_Y(n,d)$
is a nonempty Zariski open subset, then ${\mathcal 
E}\otimes_{\mathbb R}\mathbb C$ is a Poincar\'e
sheaf on $({\mathcal U}\times_{\mathbb R}\mathbb C)\times X$. In that 
case, Proposition \ref{thm1}(2) says that $d$ is coprime to $n$.

Conversely, if $d$ is coprime to $n$, taking the point $x_0$ in
the proof of Proposition \ref{thm1}(1) to be a real point we see that 
the Poincar\'e sheaf constructed in the proof of
Proposition \ref{thm1} is defined over $\mathbb R$.
\end{proof}

Henceforth, we assume that $Y$ does not have any real point.

As in Section \ref{sec1}, define
\begin{equation}\label{chi}
\chi\, :=\, d+n(1-\text{genus}(Y))\, ,
\end{equation}
where $\text{genus}(Y)$ is the arithmetic genus; note that
$\chi\,=\, \chi(E)$ for any $E\,\in\, U_Y(n,d)$.

\begin{theorem}\label{thm2}
Assume that $Y$ does not have any point defined over $\mathbb R$.

\begin{enumerate}
\item If $2n$ is coprime to $\chi$, then there
is a Poincar\'e sheaf over $U_Y(n,d)\times Y$.

\item If $2n$ is not coprime to $\chi$, then there
is no Poincar\'e sheaf over any nonempty open subset
of $U'_Y(n,d)$.
\end{enumerate}
\end{theorem}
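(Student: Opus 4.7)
\medskip\noindent\textbf{Proof proposal.}
The plan is to mimic the construction in the proof of Proposition \ref{thm1}(1), with the smooth real point $x_0$ (unavailable here) replaced by a smooth closed point $y_0 \in Y$ whose residue field is $\mathbb C$; such a point exists since $Y$ is reduced, geometrically irreducible, and has no real point. The base change of $y_0$ to $\mathbb C$ is a conjugate pair $\{x_0, \sigma(x_0)\}$ in $X\,:=\, Y_{\mathbb C}$, where $\sigma$ is the Galois conjugation. Form the line bundle
\begin{equation*}
L_{\mathbb C}\,:=\,{\Lambda}^n({\mathcal E}\vert_{Q'_{\mathbb C}\times x_0})\,\otimes\,{\Lambda}^n({\mathcal E}\vert_{Q'_{\mathbb C}\times\sigma(x_0)})
\end{equation*}
on $Q'_{\mathbb C}$; the canonical isomorphism $\sigma^*L_{\mathbb C}\,\cong\,L_{\mathbb C}$ swapping the two tensor factors provides a Galois descent datum, so $L_{\mathbb C}$ descends to a line bundle $L$ on the real scheme $Q'$, on which any $c\in k^*$ acts by the character $c\,\mapsto\, c^{2n}$. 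Using $\gcd(2n,\chi)=1$, choose $a,b\in\mathbb Z$ with $2an+b\chi=-1$ and set
\begin{equation*}
{\mathcal L}\,:=\,({\rm Det}({\mathcal E}))^b\otimes L^a,
\end{equation*}
a line bundle on $Q'$ defined over $\mathbb R$ on which $k^*$ acts by $c\,\mapsto\, c^{-1}$. Proposition \ref{p1} then implies that ${\mathcal E}':= {\mathcal E}\otimes p_{Q'}^*{\mathcal L}$ satisfies Nevins' descent criterion, producing the required Poincar\'e sheaf on $U_Y(n,d)\times Y$ over $\mathbb R$.

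For part (2), suppose a Poincar\'e sheaf ${\mathcal P}$ exists on ${\mathcal U}\times Y$ for some nonempty open ${\mathcal U}\subset U'_Y(n,d)$. Base change to $\mathbb C$ yields a Poincar\'e sheaf on ${\mathcal U}_{\mathbb C}\times X$, so Proposition \ref{thm1}(2) forces $\gcd(n,\chi)=1$. If in addition $\gcd(2n,\chi)>1$, then $\gcd(2n,\chi)=2$, so $n$ is odd and $\chi$ is even. Pulling ${\mathcal P}$ back to $\widetilde Q'\times Y$, where $\widetilde Q'\subset Q'$ is the preimage of ${\mathcal U}$, and comparing with the universal family gives an isomorphism
\begin{equation*}
\pi^*{\mathcal P}\,\cong\,{\mathcal E}\vert_{\widetilde Q'\times Y}\,\otimes\, p_{\widetilde Q'}^*{\mathcal L}
\end{equation*}
for some line bundle ${\mathcal L}$ on $\widetilde Q'$ defined over $\mathbb R$; matching the scalar $k^*$-action on the two sides forces ${\mathcal L}$ to have weight $-1$, exactly as in Proposition \ref{thm1}. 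It therefore suffices to rule out any such ${\mathcal L}$.

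This last step is the main obstacle. The strategy is to show that the $k^*$-weights attainable by real line bundles on (open subsets of) $Q'$ all lie in $\gcd(2n,\chi)\,\mathbb Z = 2\,\mathbb Z$. Over $\mathbb C$, the weight homomorphism on ${\rm Pic}(Q'_{\mathbb C})$ is generated, modulo pullbacks from $U'_X(n,d)$ (which are of weight zero), by ${\rm Det}({\mathcal E})$ (weight $\chi$) and by the classes ${\Lambda}^n({\mathcal E}\vert_{Q'_{\mathbb C}\times x})$ (weight $n$) for closed points $x\in X$. A Galois-invariant combination of the latter is supported on a $\sigma$-invariant divisor of $X$; since $Y$ has no real points, every such divisor has even degree, so its contribution to the weight lies in $2n\,\mathbb Z$. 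Combined with the $\chi\,\mathbb Z$ contribution from ${\rm Det}({\mathcal E})$, the image of the weight map is contained in $\gcd(2n,\chi)\,\mathbb Z=2\,\mathbb Z$, contradicting weight $-1$. The delicate point to be verified is that no weight-zero adjustment by a line bundle pulled back from $U'_X(n,d)$ can cure the parity obstruction to Galois descent of a weight-$(-1)$ class; this is a Brauer-type obstruction, analogous to the one exploited in \cite{BHu} in the smooth case.
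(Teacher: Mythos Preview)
Your argument for part~(1) is correct and coincides with the paper's: a smooth closed point of $Y$ with residue field $\mathbb C$ is exactly a smooth effective real divisor $D$ of degree two, and the paper replaces $\Lambda^n{\mathcal E}\vert_{Q'\times x_0}$ by $\Lambda^{2n}{\mathcal E}\vert_{Q'\times D}$, which is your descended bundle $L$.

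For part~(2) your route diverges from the paper's, and the gap you flag is genuine and not minor. Two problems precede the Brauer step you single out. First, the structural claim that $\mathrm{Pic}(Q'_{\mathbb C})$ is generated, modulo pullbacks from $U'_X(n,d)$, by $\mathrm{Det}({\mathcal E})$ and the restriction classes $\Lambda^n{\mathcal E}\vert_{Q'_{\mathbb C}\times x}$ is a nontrivial input (of Drezet--Narasimhan type) that is known for smooth curves but is not established here for the Quot scheme over a singular $X$; since the paper emphasises that the smooth-curve argument of \cite{BHu} does not carry over, you cannot take this for granted. Second, even granting such generators, a Galois-invariant class in that quotient need not arise from a $\sigma$-invariant divisor on $X$: the map from $\mathrm{Div}(X)$ to $\mathrm{Pic}(Q'_{\mathbb C})/\pi^*\mathrm{Pic}(U'_X(n,d))$ has a kernel, and invariance of the image does not force invariance of a preimage, so the parity conclusion ``weight $\in 2n\,\mathbb Z$'' is not yet justified. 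The final Brauer-type step you mention would still be needed on top of all this.

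The paper avoids the Picard-group analysis entirely. After the same reduction to $n$ odd, $\chi$ even, it finds a real point $L\in\mathrm{Pic}^d(Y)$ that is \emph{not} a real line bundle (a quaternionic class), and builds rank-$n$ bundles as extensions
\[
0\longrightarrow ({\mathcal O}_X^{\oplus 2})^{\oplus (n-1)/2}\longrightarrow V\longrightarrow L\longrightarrow 0,
\]
where ${\mathcal O}_X^{\oplus 2}$ is equipped with the standard quaternionic structure $(f_1,f_2)\mapsto(-\overline{f_2},\overline{f_1})$. Fixed points of the induced anti-involution on the extension space give stable bundles $V$ with an isomorphism $\beta:V\to\sigma^*\overline V$ satisfying $(\sigma^*\overline\beta)\circ\beta=-\mathrm{Id}$. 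These define real points of $U'_Y(n,L)\subset U'_Y(n,d)$; but the restriction of a real Poincar\'e sheaf to such a point would be a genuine vector bundle on $Y$, hence would carry $\gamma$ with $(\sigma^*\overline\gamma)\circ\gamma=+\mathrm{Id}$. Since $\mathrm{Aut}(V)=\mathbb C^*$ and $c\overline c=-1$ has no solution, these are incompatible. This geometric contradiction replaces your Picard/Brauer computation and requires no control of $\mathrm{Pic}(Q')$.
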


\begin{proof}
For a variety $Z$ defined over $\mathbb C$, let
$$
\overline{Z}\, :=\, (Z({\mathbb C})\, ,\overline{\mathcal O}_Z)
$$
be the complex conjugate variety of $Z$.
As in the proof of Lemma \ref{lem1}, define
$X\,:=\, Y\times_{\mathbb R}\mathbb C$. Let
\begin{equation}\label{r-si}
\sigma\, :\, X\, \longrightarrow\, \overline{X}
\end{equation}
be the natural isomorphism obtained from the
fact that $X$ is the base change of $Y$ to $\mathbb C$. The
composition
\begin{equation}\label{r-s2}
X\,\stackrel{\sigma}{\longrightarrow}\,\overline{X}
\,\stackrel{\overline\sigma}{\longrightarrow}\,
\overline{\overline X}\,=\, X
\end{equation}
is the identity map of $X$.

First assume that $2n$ is coprime to $\chi$. Fix a smooth effective 
real divisor $D$ on $Y$ of degree two. We note that such divisors
exist; they are in bijective correspondence with the pairs of smooth 
points of $X$ of the form $\{x\, ,\sigma(x)\}$, where $\sigma$ is the 
map in \eqref{r-si}.

That there is a Poincar\'e sheaf over $U_Y(n,d)\times Y$
can be shown exactly as done in the proof of the first part
of Proposition \ref{thm1}. Instead of the smooth point $x_0$
in the proof of Proposition \ref{thm1}, we take the
above divisor $D$. More
precisely, replace ${\Lambda}^n {\mathcal E}\mid_{Q'\times x_0}$
in \eqref{cL} by the line bundle ${\Lambda}^{2n} {\mathcal 
E}\vert_{Q'\times D}$; note that ${\mathcal
E}\vert_{Q'\times D}$ is a vector bundle of rank $2n$ over
$Q'$. Since $2n$ is coprime to $\chi$ the rest
of the argument remains unchanged.

Now assume that $\chi$ is not coprime to $2n$.

The Euler characteristic $\chi$ is coprime to $n$ if and only if $d$
is coprime to $n$. Therefore,
if $\chi$ is not coprime to $n$, then from Proposition \ref{thm1} we 
know that there is no Poincar\'e sheaf on any nonempty Zariski open
subset of $U'_Y(n,d)$; note that
any Poincar\'e sheaf on ${\mathcal U}\times Y$ defines a Poincar\'e
sheaf on $({\mathcal U}\times_{\mathbb R}{\mathbb C}) \times X$ by base 
change to $\mathbb C$.

Consequently, we assume $\chi$ is coprime to $n$.

Since $\chi$ is not coprime to $2n$, we conclude that $\chi$ is
even. Therefore, $n$ is odd because $\chi$ is coprime to $n$.
Define
\begin{equation}\label{b}
b_0\,:=\, (n-1)/2\,\in\, {\mathbb Z}\, .
\end{equation}

There is an integer $\delta$ and a real point
$$
L_0\, \in\, \text{Pic}^\delta(Y)
$$
such that $L_0$ is not a line bundle over $Y$ \cite[p. 226, Corollary 
2]{BK} (see \cite[p. 159, Proposition 2.2(2)]{GH} for smooth $Y$).
Since there are line bundles
on $Y$ of degree two (recall that there is a smooth
real divisor of degree two),
for any integer $m$, we have the real point
$$
L_0\otimes \xi^{\otimes m}\, \in\, \text{Pic}^{\delta+2m}(Y)\, ,
$$
where $\xi\,\longrightarrow\, Y$ is a line bundle of degree two, 
which is not
a line bundle over $Y$.

We will consider $L_0$ as a line bundle over $X$, because
it defines a point of $\text{Pic}^{\delta+2m}(X)$. Note that
$H^i(X, \, L_0)$ is of even dimension because it
has a quaternionic structure. Hence $\chi(L_0)$ is even.
Therefore, for any even integer $b$, there is a real point
of $\text{Pic}^{b-1+\text{genus}(Y)}(Y)$ which is not
a line bundle over $Y$; note that the Euler characteristic
is $b$.

We may assume that the degree $d$ is sufficiently large positive
by tensoring with the line bundle ${\mathcal O}_Y(aD)$, where
$D$ as before is a real smooth effective divisor of degree two,
and $a\, \in\, \mathbb N$. Note that $\chi$
is also sufficiently large positive because
$\chi\,=\, d-n(\text{genus}(Y)-1)$.

We noted earlier that $n$ is odd, and
$\chi$ is even. Hence
\begin{equation}\label{pair}
\chi+n(\text{genus}(Y)-1)+1-\text{genus}(Y)\,\equiv\, 0~\, ~
\text{~mod~} \, 2\, .
\end{equation}
We also noted above that for any even integer $b$, there is a
real point of $\text{Pic}^{b-1+\text{genus}(Y)}(Y)$ which is not
a line bundle over $Y$. Hence from \eqref{pair} we conclude that
there is a real point
\begin{equation}\label{fL}
L\, \in\, \text{Pic}^{\chi+n(\text{genus}(Y)-1)}(Y)\,=\,
\text{Pic}^{d}(Y)
\end{equation}
which is not a real line bundle over $Y$. Fix such a point $L$.

The line bundle over $X$ (respectively, $\overline{X}$)
(see \eqref{r-s2}) corresponding to $L$ in \eqref{fL} will be 
denoted by $L$ (respectively, $\overline{L}$). Since $L$ in \eqref{fL} 
is a real point, but not a line bundle on $Y$, there is a unique
isomorphism
\begin{equation}\label{eta}
\eta\, :\, L\, \longrightarrow\, \sigma^*\overline{L}
\end{equation}
such that $(\sigma^*\overline{\eta})\circ\eta\,=\, -\text{Id}_L$.
(see \eqref{r-s2}).

Let
\begin{equation}\label{r-s3}
T\, :\, {\mathcal O}_X\oplus {\mathcal O}_X\,\longrightarrow\,
{\mathcal O}_{\overline X}\oplus {\mathcal O}_{\overline X}
\end{equation}
be the isomorphism defined by $(f_1\, ,f_2)\, \longmapsto\,
(-f_2\circ\sigma^{-1}\, , f_1\circ\sigma^{-1})$. We note that
$$
(\sigma^*\overline{T})\circ T\,=\, -\text{Id}_{
{\mathcal O}^{\oplus 2}_X}\, .
$$

Consider $b_0$ defined in \eqref{b}.
Elements of $H^1(X, \, L^\vee\otimes ({\mathcal O}^{\oplus
2}_X))^{\oplus b_0}$ parametrize extensions of the form
$$
0\,\longrightarrow\, ({\mathcal O}^{\oplus 2}_X)^{\oplus b_0}
\,\longrightarrow\, V\, \longrightarrow\, L\,\longrightarrow
\, 0\, ,
$$
where $L$ is the above line bundle on $X$.
There is a universal short exact sequence
\begin{equation}\label{cV}
0 \,\longrightarrow\, ({\mathcal O}^{\oplus 2}_{X\times
{\mathbb A}})^{\oplus b_0}
\,\longrightarrow\, {\mathcal V}\, \longrightarrow\,
p^*_1L\,\longrightarrow \, 0\, ,
\end{equation}
where ${\mathbb A}\, :=\, H^1(X, \, L^\vee\otimes
({\mathcal O}^{\oplus 2}_X))^{\oplus b_0}$, and $p_1$
is the projection of $X\times{\mathbb A}$ to $X$.
Since $d$ is sufficiently large, all stable vector bundles
over $X$ of rank $n$ and determinant $L$ occur in the family
$\mathcal V$ in \eqref{cV}. Let
\begin{equation}\label{cS}
{\mathcal S}\,\subset\,{\mathbb A}
\end{equation}
be the locus of stable bundles for the family
in \eqref{cV}; from the openness of the stability condition
(see \cite{Ma}) it follows that ${\mathcal S}$ is a Zariski open
subset. Let $U'_X(n,L)$ be the moduli space of stable vector
bundles $E$ over $X$ of rank $n$ with $\bigwedge^n E\,=\, L$. Let
\begin{equation}\label{Phi}
\Phi\, :\, {\mathcal S}\, \longrightarrow\, U'_X(n,L)
\end{equation}
be the surjective morphism representing the family $\mathcal V$ in
\eqref{cV}.

Let
\begin{equation}\label{AU}
\psi\, :\, U'_X(n,L)\, \longrightarrow\, U'_X(n,L)
\end{equation}
be the bijection defined by $E\, \longmapsto\, \sigma^*
\overline{E}$, where $\sigma$ is the isomorphism in \eqref{r-si},
and $\overline{E}$ is the vector bundle over $\overline{X}$ 
corresponding to the vector bundle $E$ over $X$. It should be
clarified that $\psi$ is not algebraic, it is not even holomorphic,
but anti-holomorphic. Let $\overline{U'_X(n,L)}$ be the variety
obtained from $U'_X(n,L)$ using the automorphism of the field
$\mathbb C$ defined by $z\, \longmapsto\, \overline{z}$. Therefore,
the complex points of $\overline{U'_X(n,L)}$ are in bijective
correspondence with the complex points of $U'_X(n,L)$. Using
this bijection, if we consider $\psi$ in \eqref{AU} as a map
$U'_X(n,L)\, \longrightarrow\, \overline{U'_X(n,L)}$, then this
map is an algebraic isomorphism.

The isomorphism $\psi$ in \eqref{AU} is clearly involutive.
This $\psi$ defines a real structure
on the complex variety
$U'_X(n,L)$. The corresponding variety over $\mathbb R$
is the moduli space $U'_Y(n,L)\, :=\, {\det}^{-1}(L)$, where
\begin{equation}\label{det}
\det\, :\, U'(n,d)\, \longrightarrow\, \text{Pic}^d(Y)
\end{equation}
is the morphism defined by $E\, \longmapsto\, \bigwedge^n E$,
and $L$ is the point in \eqref{fL}.

Let
\begin{equation}\label{th.}
\theta\, :\, H^1(X, \, L^\vee\otimes
({\mathcal O}^{\oplus 2}_X))^{\oplus b_0}\, \longrightarrow\,
H^1(X, \, L^\vee\otimes
({\mathcal O}^{\oplus 2}_X))^{\oplus b_0}
\end{equation}
be the conjugate linear involution constructed using
$\eta$ and $T$ defined in \eqref{eta} and \eqref{r-s3}
respectively. The subset $\mathcal S$ in \eqref{cS} is preserved
by $\theta$, and
\begin{equation}\label{2ph}
\psi\circ\Phi \, =\, \Phi\circ \theta\, ,
\end{equation}
where $\Phi$ and $\psi$ are constructed in \eqref{Phi} and
\eqref{AU} respectively. Therefore, the morphism $\Phi$
is defined over $\mathbb R$.

The fixed point locus
\begin{equation}\label{th.2}
{\mathbb A}^\theta\, \subset\, {\mathbb A}
\end{equation}
for $\theta$ is a $\mathbb R$--linear
subspace such that the natural homomorphism
$$
{\mathbb A}^\theta\otimes_{\mathbb R}{\mathbb C}
\,\longrightarrow\, H^1(X, \, L^\vee\otimes
({\mathcal O}^{\oplus 2}_X))^{\oplus b_0}
$$
is an isomorphism;
hence ${\mathbb A}^\theta$ is Zariski dense in $\mathbb A$.

Let
\begin{equation}\label{cW0}
{\mathcal W}\, \longrightarrow\, {\mathcal U} \times Y
\end{equation}
be a Poincar\'e sheaf, where ${\mathcal U}$ is a nonempty Zariski
open subset of $U'_Y(n,d)$. The morphism $\det$ in \eqref{det}
is an open smooth surjective morphism, hence
the image $\det ({\mathcal U})$ is a nonempty Zariski open subset
of $\text{Pic}^d(Y)$. We noted earlier that there is a
real point of $\text{Pic}^d(Y)$ which are not a line bundle
on $Y$ (see \eqref{fL}), and also it is known
that any (nonempty) connected component of the locus of
real points in a smooth quasiprojective variety is
Zariski dense. Hence the set of real points
of $\text{Pic}^d(Y)$ which are not line bundles on $Y$ is
Zariski dense in $\text{Pic}^d(Y)$. In particular, this
set intersects the Zariski open subset $\det({\mathcal U})$,
where ${\mathcal U}$ is the open subset in \eqref{cW0}.
Therefore, we may take the chosen point $L$ in \eqref{fL} to be
inside $\det({\mathcal U})$. Hence we assume that
$$
L\, \in\, \det({\mathcal U})\, .
$$

Since ${\mathcal S}$ in \eqref{cS} is a nonempty Zariski open
subset, and ${\mathbb A}^\theta$ defined in \eqref{th.2} is Zariski
dense, we know that ${\mathcal S}\bigcap {\mathbb A}^\theta$
is Zariski dense in ${\mathbb A}$.
Take any point
\begin{equation}\label{z}
z\, \in\, {\mathcal S}\cap {\mathbb A}^\theta
\end{equation}
such that the corresponding vector bundle
\begin{equation}\label{z2}
{\mathcal V}_z\, :=\,
{\mathcal V}\vert_{\{z\}\times X}
\end{equation}
lies in the open subset $\mathcal U$ in \eqref{cW0}.
Since $z\, \in\, {\mathbb A}^\theta$, from \eqref{2ph}
we know that $\psi(\Phi(z))\,=\, \Phi(z)$, where
$\psi$ and $\Phi$ are defined in \eqref{AU} and
\eqref{Phi} respectively. Therefore, there is an isomorphism
\begin{equation}\label{beta}
\beta\, :\,{\mathcal V}_z \, \longrightarrow\,
\sigma^*\overline{{\mathcal V}_z}\, ,
\end{equation}
constructed using
$\sigma$ and $T$ defined in \eqref{r-si} and \eqref{r-s3}
respectively, such that
\begin{equation}\label{inv}
(\sigma^*\overline{\beta})\circ\beta\,=\, -
\text{Id}_{{\mathcal V}_z}\, .
\end{equation}
This isomorphism $\beta$ fits in the following commutative
diagram
$$
\begin{matrix}
0 & \longrightarrow & ({\mathcal O}^{\oplus 2}_X)^{\oplus b_0}
& \longrightarrow & {\mathcal V}_z & \longrightarrow & L
& \longrightarrow & 0\\
&& ~\Big\downarrow{T} && ~\Big\downarrow\beta &&
~\Big\downarrow \eta\\
0 & \longrightarrow & \sigma^*({\mathcal O}^{\oplus
2}_{\overline{X}})^{\oplus b_0}= ({\mathcal O}^{\oplus
2}_X)^{\oplus b_0}
& \longrightarrow & \sigma^*\overline{{\mathcal V}_z} &
\longrightarrow & \sigma^*\overline{L} & \longrightarrow & 0
\end{matrix}
$$
where the horizontal exact sequences are as in \eqref{cV},
and the maps $T$ and $\eta$ are constructed in \eqref{r-s3}
and \eqref{eta} respectively.

Let $\underline{z}\, \in\, U'(n,d)$ be the point corresponding
to the vector bundle ${\mathcal V}_z$ in \eqref{z2}.
Restricting the Poincar\'e bundle $\mathcal W$ to
$\{\underline{z}\}\times Y$ we get a vector bundle over
$Y$ which is represented by this point $\underline{z}$
of the moduli space.

For any geometrically stable vector bundle $W_0$
over $Y$, the group of all automorphisms of the corresponding
vector bundle $W$ over $X$ is the group of nonzero complex numbers.
There is a natural isomorphism
$$
\gamma\, :\, W \, \longrightarrow\,
\sigma^*\overline{W}
$$
such that $(\sigma^*\overline{\gamma})\circ\gamma
\,=\, \text{Id}_{W}$. Since any other isomorphism
$W \, \longrightarrow\,\sigma^*\overline{W}$ must be of the form
$c\cdot \gamma$, where $c\, \in\, {\mathbb C}^*$ (recall that
the automorphisms of $W$ are the nonzero scalars), we conclude
that there is no isomorphism
$$
\phi\, :\, W \, \longrightarrow\,
\sigma^*\overline{W}
$$
such that $(\sigma^*\overline{\phi})\circ\phi
\,=\, - \text{Id}_{W}$ (there is no complex number such that
$\overline{c}c\,=\, -1$).

But the vector bundle ${\mathcal W}\vert_{\{\underline{z}\}
\times Y}$ in \eqref{cW0} contradicts the above observation that it
is not possible to have simultaneously isomorphisms $\gamma$ and
$\phi$ of the above type. From this contradiction we conclude
that there is no Poincar\'e sheaf over 
${\mathcal U}\times Y$. This completes the proof of the theorem.
\end{proof}


\begin{thebibliography}{AAAA}

\bibitem[BK]{BK} E. Ballico and J. Koll\'ar, The Picard group of singular
curves, \textit{Abh. Math. Sem. Univ. Hamburg} \textbf{73} (2003),
225--227.

\bibitem[BiHo]{BH} I. Biswas and N. Hoffmann, Poincar\'e families and 
automorphisms of principal bundles on a curve,
\textit{C.R. Acad. Sci. Paris} 347 (2009), 1285--1288. 

\bibitem[BiHu]{BHu} I. Biswas and J. Hurtubise, Universal vector bundle 
over the reals, \textit{Trans. Amer. Math. Soc.} (to appear),
http://arxiv.org/abs/0909.0041.

\bibitem[GH]{GH} B. Gross and J. Harris, Real algebraic curves, 
\textit{Ann. Sci. \'Ec. Norm. Sup.} \textbf{14} (1981), 157--182.

\bibitem[Ma]{Ma} M. Maruyama, Openness of a family of
torsion free sheaves. \textit{Jour. Math. Kyoto Univ.}
\textbf{16} (1976), 627--637.

\bibitem[Nev]{N} T. Nevins, Descent of coherent sheaves and complexes to
geometric invariant theory quotients, \textit{Jour. Algebra}
\textbf{320} (2008), 2481--2495. 

\bibitem[New]{Ne} P. E. Newstead, \textit{Introduction to Moduli problems
and orbit spaces}, TIFR lecture Notes, Springer-Verlag (1978). 

\bibitem[Ra]{Ra} S. Ramanan, The moduli spaces of vector bundles
over an algebraic curve, \textit{Math. Ann.} \textbf{200} (1973),
69--84.

\bibitem[Se]{S} C. S. Seshadri, \textit{Fibr\'es vectoriels sur les 
courbes alg\'ebriques}, Ast\'erisque 96 (1982) 1--209.

\end{thebibliography}
\end{document}